\theoremstyle{plain}
\newtheorem{corollary}{Corollary}
\newtheorem{lemma}{Lemma}
\newtheorem{remark}{Remark}
\newtheorem{theorem}{Theorem}
\newtheorem{conjecture}{Conjecture}
\newcommand{\F}[1]{\ensuremath{\mathfrak{#1}}}
\newcommand{\C}[1]{\ensuremath{\mathcal{#1}}}
\newcommand{\B}[1]{\ensuremath{\mathbb{#1}}}
\date{\today}
\begin{document}

\title[Meromorphic Spectral Zeta Functions on Fractals]{Existence of a Meromorphic Extension of Spectral Zeta Functions on Fractals}

\author{Benjamin A.~Steinhurst}
\address{Department of Mathematics and Computer Science, McDaniel College, Westminster, MD, 21157
\url{http://www2.mcdaniel.edu/bsteinhurst/}}
\email{{bsteinhurst@mcdaniel.edu}}
\thanks{Research supported in part by the National Science Foundation, grants DMS-0652440 (first author) and DMS-0505622 (second author).}

\author{Alexander~Teplyaev}

\address{Department of Mathematics, University of Connecticut, Storrs, CT, 06269 \url{http://www.math.uconn.edu/~teplyaev/}}
\email{{teplyaev@uconn.edu}}


\begin{abstract}
We investigate the existence of the meromorphic extension of the spectral zeta function of a Laplacian on self-similar fractals using the results of Kigami and Lapidus (based on renewal theory) and the newer results by Hambly and Kajino based on heat kernel estimates and other probabilistic techniques. We also formulate conjectures which hold true for the examples that have been analyzed in the existing literature. 
\end{abstract}

\subjclass[2010]{Primary: 81Q35, 28A80; Secondary: 30D30, 31E05, 35P20, 47A75, 60J25, 60J35, 81Q10}
\keywords{meromorphic, spectral zeta function, Laplacian, self-similar fractals, heat kernel estimates. }

\maketitle

There have been many  works in mathematical physics, analysis, and probability on fractals  studying spectral and heat kernel asymptotics of various Laplacians on fractal sets, see  \cite[and references therein]{ADT,ADT2,eigenpapers,eigenpaper2,DerfelEtAl2008,DGV,Hambly2010,Kajino2010,LapidusvF2006,Teplyaev2007}. It is possible that fractal spaces may provide useful models for the study of quantum gravity \cite{AJL-univ}. In particular, on many fractals the short time asymptotics of the partition function are not given by a power function alone as it is on manifolds but in many cases the power function is corrected by a multiplicatively periodic function. This behavior has been observed  in \cite{KigamiLapidus1993,GrabnerWoess1997,Grabner1997,Str2009} for finitely ramified fractals, and \cite{ADT,Hambly2010,Kajino2010} extend the class of fractals for which one can expect the log-periodic oscillations in the short time heat kernel asymptotics.

In this paper we investigate the related question of the existence of a meromorphic continuation of the spectral zeta function, which has found many profitable uses in physics \cite{Elizalde1995,Kirsten2010} (e.g. Casimir effect \cite{BCOR,Casimir1997}). If the Weyl ratio for the eigenvalue counting function is a multiplicatively periodic function, up to a smaller order term (as proved by Kigami and Lapidus in \cite{KigamiLapidus1993}),  then the spectral zeta function can be expected to be meromorphic in some region to the left of $d_S$ where $d_S$ is the spectral dimension of the underlying Laplacian. We discuss how  new results by Hambly and Kajino \cite{Hambly2010,Kajino2010,KajinoAMS}  can be applied to obtain a meromorphic continuation of the spectral zeta function of the Laplacian on certain fractals, such as finitely ramified symmetric fractals and Sierpinski carpets. Furthermore, if the partition function is decomposed into a sum of power functions times multiplicatively periodic terms, and an exponentially decreasing term (with no other terms), then the spectral zeta function is meromorphic over the whole complex plane (this is done, in relation to \cite{ADT} concerning the physical implications the existence of meromorphic continuations).

If the Laplace operator $L$ has a discrete spectrum with eigenvalues $\lambda_l$, 
repeated according to their multiplicities, then the spectral zeta function of $L$ is given by 
\begin{equation}
	\zeta(s,\gamma) = \sum_{l=1}^{\infty} (\lambda_l + \gamma)^{-s/2}
\end{equation} 
whenever the series converges absolutely. The use of $s/2$ instead of $s$ is not essential, but is precedented in the cited literature and only changes the results by a scaling factor of $2$. 
Recall that the partition function of  a non-negative self-adjoint operator $L$ is  
$
	Z_L(t) = Tr(e^{-tL})$, which decays exponentially for large $t$ in the case of a discrete spectrum with no or excluded zero eigenvalue. 
By applying the inverse Mellin transform (\cite{DaviesBook,Debnath2007})  to $Z_L(t)$, we have
\begin{equation}
	\zeta(2s,\gamma) = \frac{1}{\Gamma(s)} \int_0^{\infty} t^{s-1}Z_L(t)e^{-\gamma t}\ dt.
\end{equation}
We consider Laplacians on self-similar compact sets $F$, which are connected metric space with injective contraction maps $\{\psi_j\}_{j=1}^N$  such that $\psi_j:F\to F$ and $F = \bigcup_{j=1}^{N} \psi_j(F)$. For the sake of simplicity, we only consider the unique probability self-similar measure $\mu$ on $F$ with equal weights, that is $\mu(\psi_j(F))=1/N$. On a self-similar set, in addition to a self-similar metric and measure, one could ask what it means for a Dirichlet form to be self-similar. We assume the existence of the following decomposition of a local regular Dirichlet form $\mathcal{E}$ on $F$
\begin{equation}\label{e-s-s-r-f}
	\mathcal{E}(f,g) = \rho_F \sum_{i=1}^{N} \mathcal{E}(f\circ \psi_i,g\circ \psi_i),
\end{equation}
and note that the effect of applying $\mathcal{E}$ to $f \circ \psi_i$ is to localize $\mathcal{E}$ to act only on $\psi_i(F)$. 
With appropriate boundary conditions and domain, the Laplacian $\Delta$ is defined in a weak sense by $\mathcal{E}(f,g) = -\int_F f\Delta g\,d\mu$.

The constant $\rho_F$ is called the energy rescaling factor or conductance scaling factor. Its reciprocal $r=\frac1{\rho_F}$ is the resistance scaling factor. In a number of papers  the value of $\rho_F$ is explicitly calculated for various finitely ramified fractals (see \cite{eigenpapers,eigenpaper2,BFPRST,HMT,Ki,Ki2,St06book,Tcjm} and references therein), however for infinitely ramified fractals the only examples where the exact energy scaling was obtained are \cite{BDMS,Steinhurst,S-POTA} (information on  eigenvalues and eigenfunctions can be found in \cite{KKPSS,RomeoSteinhurst}). For generalized Sierpinski carpets,  $\rho_F$ can be estimated and the uniqueness can be proved (see \cite{BB,BB1,BB2,BB3,HKKZ,BBKT} and references therein). 
Note that the spectral dimension is given by $$d_S=\frac{2\log(N)}{\log(\tau)}=\frac{2\log(N)}{\log(\rho_FN)}=2\frac{d_f}{d_w}=2\frac{d_0}{d_w}$$
where $d_f=d_0$ is the Hausdorff dimension and $d_w$ is the so-called walk dimension  (see Lemma~\ref{ExpBounds} and the related work \cite{St03jfa} by Strichartz on the spectral dimension). The Laplacian scaling factor $\tau=\rho_F N$ is also known as the time scaling factor. 

We say that a self-similar set $K$ is finitely ramified and 
fully symmetric if 
 the following three conditions hold: 
 \begin{enumerate}
 	\item there exists a finite subset $V_0$  of $K$ such that $\psi_j(K) \cap \psi_k(K) = \psi_j(V_0) \cap \psi_k(V_0)$ for $j \neq k$ (this  intersection may be empty); 
 	\item if $v_0\in V_0\cap\psi_j(K)$ then $v_0$ is the fixed point of $\psi_j$;
 	\item  there is a group $\F G$ of isometries of $K$ that has a doubly transitive action on $V_0$ and is compatible with the self-similar structure $\{\psi_j\}_{j=1}^N$, which means (\cite[Proposition 4.9]{NT} and also \cite{eigenpapers,HSTZ,MT}) that for any $j$ and any $g\in\F G$ there exists $k$ such that $g\circ\psi_j=\psi_k$.
 \end{enumerate}
 Moreover, a  fully symmetric finitely ramified self-similar set $K$ is a  post-critically finite (p.c.f.) self-similar set if and only if for any $v_0\in V_0$ there is a unique $j$ such that $v_0\in\psi_j(K)$ \cite{HSTZ,NT}. 

Our first  result is the following theorem, which improves the main result in  \cite[Theorem 2]{Teplyaev2007} (this  result is related to the gaps in the spectrum, see \cite{HSTZ}) which states that for the spectral zeta function for a polynomial with a totally disconnected Julia set then the zeta function extends to the left of the tower of poles at $d_S$ by some $\epsilon$. The connection with spectral zeta functions in a fractal context is given through \cite[Theorem 7]{Teplyaev2007} in the example of the Sierpinski gasket. The spectrum of the Laplacian on the Sierpinski gasket if given via the Julia set of a polynomial using the spectral decimation method. 

\begin{theorem}
On any  fully symmetric p.c.f. fractal, as defined above, the spectral zeta function with $\gamma = 0$  has a meromorphic continuation to $Re(s)>-\epsilon$  for some positive $\epsilon$ with at most two sequences  of poles, also called spectral dimensions, at $Re(s)=d_S$ and $Re(s)=0$.
\end{theorem}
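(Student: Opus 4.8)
The plan is to route everything through the partition function $Z_L(t)=\mathrm{Tr}(e^{-tL})$ and the Mellin representation already recorded above. Since $\gamma=0$ and we exclude the zero eigenvalue coming from the constant eigenfunction, $Z_L(t)$ decays exponentially as $t\to\infty$ (the content of Lemma~\ref{ExpBounds}), so
\begin{equation*}
\frac{1}{\Gamma(s)}\int_1^\infty t^{s-1}Z_L(t)\,dt
\end{equation*}
is an entire function of $s$. Hence the analytic structure of $\zeta(2s,0)$ is dictated entirely by the small-$t$ behaviour of $Z_L(t)$ through $\frac{1}{\Gamma(s)}\int_0^1 t^{s-1}Z_L(t)\,dt$, and the task reduces to producing an asymptotic expansion of $Z_L(t)$ as $t\to 0^+$ and transforming it term by term.

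To obtain that expansion I would start from the eigenvalue counting function $N(\lambda)=\#\{l:\lambda_l\le\lambda\}$. On a fully symmetric p.c.f.\ fractal the spectrum is generated by a single Laplacian scaling ratio $\tau=\rho_F N$, which places the renewal estimate of Kigami and Lapidus \cite{KigamiLapidus1993} in the lattice (arithmetic) case: there are bounded, $\log\tau$-periodic functions $G_1,G_0$ with
\begin{equation*}
N(\lambda)=\lambda^{d_S/2}\,G_1\!\left(\tfrac12\log\lambda\right)+G_0\!\left(\tfrac12\log\lambda\right)+o(1),\qquad \lambda\to\infty,
\end{equation*}
where the leading term is the oscillating Weyl term and $G_0$ is the bounded correction attached to the boundary set $V_0$ and the gaps in the spectrum (cf.\ \cite{HSTZ}). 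Writing $Z_L(t)=\int_0^\infty e^{-\lambda t}\,dN(\lambda)$ and applying the Abelian correspondence mode by mode turns this into
\begin{equation*}
Z_L(t)=t^{-d_S/2}\,p_1(\log t)+p_0(\log t)+O(t^{\delta}),\qquad t\to 0^+,
\end{equation*}
with $p_1,p_0$ again $\log\tau$-periodic and $\delta>0$.

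Expanding $p_j(\log t)=\sum_{n\in\mathbb Z}c^{(j)}_n\,t^{2\pi i n/\log\tau}$ and integrating over $(0,1)$ term by term gives
\begin{equation*}
\int_0^1 t^{s-1}\,t^{-d_S/2}\,p_1(\log t)\,dt=\sum_{n\in\mathbb Z}\frac{c^{(1)}_n}{s-\tfrac{d_S}{2}+\tfrac{2\pi i n}{\log\tau}},
\end{equation*}
and the same with $d_S/2$ replaced by $0$ for $p_0$. These are two vertical arrays of simple poles of $\zeta(2s,0)$, one along $\mathrm{Re}(s)=d_S/2$ and one along $\mathrm{Re}(s)=0$; in the argument of $\zeta$ itself this is precisely the asserted pair of pole sequences at $\mathrm{Re}=d_S$ and $\mathrm{Re}=0$. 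The prefactor $\tfrac{1}{\Gamma(s)}$ vanishes at $s=0,-1,-2,\dots$, so it annihilates the central ($n=0$) pole of the $p_0$-array while leaving the $\mathrm{Re}(s)=d_S/2$ array (which sits at positive real part) untouched; thus at most two sequences survive. Finally $\int_0^1 t^{s-1}O(t^\delta)\,dt$ is holomorphic for $\mathrm{Re}(s)>-\delta$, which furnishes the meromorphic continuation up to the strip $\mathrm{Re}(s)>-\epsilon$ with $\epsilon$ determined by $\delta$.

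The main obstacle is not the formal pole count but the two quantitative inputs the argument rests on. First, the bare Kigami--Lapidus error is only $o(1)$, and upgrading it to an expansion of $Z_L(t)$ with an \emph{explicit} power-law remainder $O(t^\delta)$ is the delicate Tauberian step that alone forces the continuation to stop at a genuine strip rather than extend to the whole plane. Second, the termwise Mellin transformation of the Fourier series demands enough decay of the coefficients $c^{(j)}_n$, equivalently enough regularity of $G_1,G_0$, to ensure that the two arrays of poles are locally finite and that the interchange of summation and integration is legitimate. Securing these estimates, and thereby pinning down both $\delta$ and the admissible $\epsilon$, is where the real work lies.
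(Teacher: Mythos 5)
Your overall framework (Mellin transform of the heat trace, split at $t=1$, termwise transformation of log-periodic terms) is the same one the paper uses in Lemma~\ref{lem-L}, but the input you feed into it does not exist in the literature you invoke, and this is a genuine gap rather than a deferred technicality. Kigami--Lapidus \cite{KigamiLapidus1993} prove only the leading-order statement $N(\lambda)=\lambda^{d_S/2}\bigl(G(\tfrac12\log\lambda)+o(1)\bigr)$; their theorem contains no bounded periodic second term $G_0$ and no power-saving remainder. Consequently your claimed expansion $Z_L(t)=t^{-d_S/2}p_1(\log t)+p_0(\log t)+O(t^{\delta})$ is not a consequence of anything you cite: the $o(1)$ error in the counting function transfers, via the Abelian direction, only to an error $o(t^{-d_S/2})$ in the heat trace, and the Mellin transform of such an error converges only for $\mathrm{Re}(s)>d_S$ (in the variable of $\zeta$), so it yields no continuation past the abscissa of convergence at all. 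The strongest small-time estimate actually available is Kajino's \cite[Theorem 7.7 and Corollary 7.8]{Kajino2010}: one periodic term with a two-sided remainder $O(t^{-d_\partial/d_w})=O(1)$, since $d_\partial=0$ for a fully symmetric p.c.f. fractal. Fed into Lemma~\ref{lem-L} with $\gamma=0$, this gives a continuation only to $\mathrm{Re}(s)>0$, with no poles in $0<\mathrm{Re}(s)<d_S$. Your two-term expansion with power-law remainder is strictly stronger than this (it is essentially the content of Kajino's unpublished results mentioned in the paper's figure caption), and your closing paragraph concedes you cannot prove it; since the entire continuation past $\mathrm{Re}(s)=0$ --- the ``$-\epsilon$'' part of the statement --- rests on exactly that unproven estimate, the proposal does not prove the theorem.

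The paper closes this gap with an ingredient your proposal lacks entirely: the spectral-decimation/complex-dynamics results of \cite{eigenpapers,Teplyaev2007}, which give, for fully symmetric p.c.f. fractals, a meromorphic continuation of $\zeta(s,0)$ to $\mathrm{Re}(s)>-\epsilon$ with finitely many vertical sequences of poles in the closed strip $0\leqslant\mathrm{Re}(s)\leqslant d_S$. Neither input alone suffices: decimation continues past $0$ but allows spurious pole lines inside the strip, while the heat-kernel estimate kills poles in the open strip but stops at $\mathrm{Re}(s)=0$; the theorem follows only by combining the two, which is exactly what the paper's proof does. (A minor further point: your remark about $1/\Gamma(s)$ is off --- its zero at $s=0$ can cancel only the single $n=0$ pole of the $\mathrm{Re}(s)=0$ array, not a whole sequence, and it has no bearing on why at most two sequences occur; that count comes from the number of terms in the expansion.)
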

\long\def\BBF#1#2#3{\begin{figure}[htb]#3\caption{#2}\label{#1}\end{figure}} 
\BBF{figSigCSD}
{Complex spectral dimensions of the Laplacian on a fully symmetric finitely ramified fractal. 
New results of Kajino \cite{KajinoAMS} imply that in fact there is a meromorphic continuation for all $s\in\mathbb C$.}
{\begin{picture}(250,170)(-125,-85) 
\setlength{\unitlength}{0.5pt}\small
\thicklines\setlength{\unitlength}{0.4pt}
\put(-250,0){\vector(1,0){500}}
\put(0,-200){\vector(0,1){400}}
\put(5,5){{$0$}}
\put(152,7){${1}$}
\put(-145,7){${-\epsilon}$}
\put(150,-5){\line(0,1){10}}
\put(-150,-5){\line(0,1){10}}
\put(213,7){${d_S}$}
\multiput(210,-180)(0,60){7}{\circle{11}}
\multiput(0,-180)(0,60){7}{\circle{11}}
\multiput(-150,-200)(0,20){20}{\line(0,1){10}}
\thinlines
\multiput(-150,-150)(0,6){58}{\line(-5,-3){120}}
\end{picture}} 
\begin{proof}It is easy to see that the spectral zeta function is analytic for $Re(s)>d_S$ and there is a simple pole at $s=d_S$. From the results  in \cite{eigenpapers,Teplyaev2007} we obtain that there exists a meromorphic continuation to the half-plane $Re(s)>-\epsilon$  with finitely many sequences of poles in $0\leqslant Re(s)\leqslant d_S$. In addition, \cite[Theorem 7.7 and Corollary 7.8]{Kajino2010} and Lemma~\ref{lem-L} applied with $\gamma = 0$ imply that there are no poles in $0< Re(s)< d_S$. Note that according to \cite[Definition 2.10 and Definition 6.8]{Kajino2010}, a fully symmetric p.c.f. fractal has zero-dimensional rational boundary and so there are heat kernel estimates (see \cite{KiHKE}). Thus the only possible sequences of poles are at $Re(s)=d_S$ and $Re(s)=0$, which completes the proof. 
\end{proof}

For ease of reference, \cite[Theorem 7.7 and Corollary 7.8]{Kajino2010} are reproduced at the end o this paper. See Theorem \ref{thm:kajino} and Corollary \ref{cor:kajino}.

\begin{theorem}\label{thm:funcequal}For any intersection type finite self-similar structures (see \cite[Theorem 7.7 and Corollary 7.8]{Kajino2010}), including fully symmetric p.c.f. fractals, nested fractals and 
generalized Sierpinski carpets,  the spectral zeta function associated to the  self-similar Laplacian has a meromorphic extension to beyond the spectral dimension, at least to the  half-plane $Re(s)>2\frac{d_\partial}{d_w}$. 
Moreover, the spectral zeta function satisfies the following functional equation for $\gamma > -c_4$ and $Re(s) > 2\frac{d_{\partial}}{d_w}$
 \begin{equation}\label{l-dif}
 	\frac{d}{d\gamma}\zeta(s,\gamma) = -\gamma \zeta(s+2,\gamma).
\end{equation}
The poles of $\zeta(s,0)$ are located, in the region $Re(s)>2\frac{d_\partial}{d_w}$, at \ $d_S + \frac{4\pi i n}{\log( \tau  )}$. When $\gamma \neq 0$ they are located at \ $d_S - 2m + \frac{4\pi i n}{\log( \tau  )}$ for $m \ge 0$.
\end{theorem}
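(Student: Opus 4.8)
The plan is to route everything through the Mellin/heat-kernel representation $\zeta(2s,\gamma)=\frac{1}{\Gamma(s)}\int_0^\infty t^{s-1}Z_L(t)e^{-\gamma t}\,dt$ and to extract the analytic structure of $\zeta$ from the small-$t$ behaviour of the integrand, the large-$t$ behaviour contributing nothing new. First I would split $\int_0^\infty=\int_0^1+\int_1^\infty$. On $[1,\infty)$ the partition function decays exponentially (Lemma~\ref{ExpBounds}), and for $\gamma>-c_4$ the factor $e^{-\gamma t}$ preserves this decay, so $\int_1^\infty t^{s-1}Z_L(t)e^{-\gamma t}\,dt$ is entire in $s$ and contributes no poles. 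Hence the entire pole structure is produced by the short-time asymptotics of $Z_L(t)e^{-\gamma t}$ fed through $\int_0^1$.

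The substantive input is the short-time expansion of the heat trace furnished by the heat-kernel estimates of Hambly and Kajino for intersection-type self-similar structures (the setting of \cite[Theorem 7.7 and Corollary 7.8]{Kajino2010}), together with Lemma~\ref{lem-L} and the estimates of \cite{KiHKE}. For such fractals one has
\[
 Z_L(t)=t^{-d_S/2}\,p(\log t)+O\!\left(t^{-d_\partial/d_w}\right)\qquad(t\to 0^+),
\]
where $p$ is continuous and periodic of period $\log\tau$; heuristically the self-similar renewal relation $Z_L(t)\approx N\,Z_L(\tau t)$ forces both the leading exponent $d_S/2=\log N/\log\tau$ and the periodicity, since $N\tau^{-d_S/2}=1$. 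Expanding $p$ in its Fourier series $p(x)=\sum_n c_n e^{2\pi i n x/\log\tau}$ turns the leading term into $\sum_n c_n t^{-d_S/2+2\pi i n/\log\tau}$, and a term-by-term Mellin transform on $[0,1]$ yields $\sum_n c_n\,(s-d_S/2+2\pi i n/\log\tau)^{-1}$, i.e. simple poles at $s=d_S/2-2\pi i n/\log\tau$; the remainder, being $O(t^{-d_\partial/d_w})$, has an absolutely convergent Mellin integral for $Re(s)>d_\partial/d_w$. Reinstating the $s\mapsto 2s$ normalization promotes this to the half-plane $Re(s)>2d_\partial/d_w$ and places the poles of $\zeta(s,0)$ exactly at $d_S+\frac{4\pi i n}{\log\tau}$, the $4\pi$ being $2\times 2\pi$ from the doubled argument.

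For $\gamma\neq 0$ I would insert the Taylor expansion $e^{-\gamma t}=\sum_{m\ge 0}\frac{(-\gamma)^m}{m!}t^m$ into the same integral; each power $t^m$ raises the Mellin exponent and hence translates every pole of the $\gamma=0$ family by $-m$ in $s$, that is by $-2m$ in the doubled variable, producing the double array $d_S-2m+\frac{4\pi i n}{\log\tau}$ with $m\ge 0$. The functional equation is the most elementary ingredient: since $\partial_\gamma\bigl(Z_L(t)e^{-\gamma t}\bigr)=-t\,Z_L(t)e^{-\gamma t}$, differentiating the representation under the integral raises the power of $t$ by one and shifts the zeta argument to $s+2$, while the ratio $\Gamma(s+1)/\Gamma(s)$ supplies the multiplicative constant; termwise differentiation of the Dirichlet series gives the identical relation directly from $\partial_\gamma(\lambda_l+\gamma)^{-s/2}=-\tfrac{s}{2}(\lambda_l+\gamma)^{-(s+2)/2}$. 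One proves the identity first on the half-plane of absolute convergence and then propagates it to $Re(s)>2d_\partial/d_w$ by analytic continuation.

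The main obstacle is not in any of these manipulations but in securing the expansion above with an error term genuinely controlled down to the boundary scale $d_\partial/d_w$: this is precisely where the deep heat-kernel estimates of \cite{Hambly2010,Kajino2010,KiHKE} are indispensable, and one must verify that in the strip between $d_S$ and $2d_\partial/d_w$ there are no further contributions to the short-time expansion, so that for $\gamma=0$ the only poles there are the log-periodic family (unlike the full Weyl expansion on a manifold, no intermediate integer-spaced heat coefficients appear). A secondary technical point is to justify the term-by-term Mellin transform of the Fourier series and the convergence of $\sum_n c_n(s-\cdots)^{-1}$ to a bona fide meromorphic function, which follows from enough regularity of $p$ to force decay of the coefficients $c_n$.
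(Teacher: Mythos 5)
Your overall architecture coincides with the paper's: both arguments push the Hambly--Kajino short-time trace asymptotics (\cite{Hambly2010}, \cite[Theorem 7.7 and Corollary 7.8]{Kajino2010}) through the Mellin representation, split the integral at $t=1$, note that the tail is entire in $s$ for $\gamma>-c_4$, and read the poles off the log-periodic leading term; the paper packages exactly this as Lemma~\ref{lem-L} (which, not Lemma~\ref{ExpBounds}, is also the correct citation for the exponential bound on $[1,\infty)$). The one genuinely different step is how you continue the Mellin transform of the periodic term. You expand $p$ in a Fourier series and transform termwise, producing $\sum_n c_n\,(s-d_S/2+2\pi i n/\log\tau)^{-1}$; the paper instead substitutes $t=\tau^{T}$, cuts $(-\infty,0)$ into unit periods, and sums an exact geometric series, giving
\begin{equation*}
I_1(s,0)\;=\;\frac{\log\tau}{\Gamma(s)}\Bigl(\int_0^1 \tau^{U(s-d_f/d_w)}G(U\log\tau)\,dU\Bigr)\,\frac{1}{\tau^{\,s-d_f/d_w}-1},
\end{equation*}
whose poles are manifest and which asks nothing of $G$ beyond boundedness. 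Your route can be made rigorous, but your stated justification is its weak point: Kajino's periodic function is only known to be continuous (and Lemma~\ref{lem-L} assumes merely bounded measurable), so there is no ``regularity forcing decay of $c_n$''; what actually rescues the termwise transform is Parseval plus Cauchy--Schwarz, since $(c_n)\in\ell^2$ and $|s-d_S/2+2\pi i n/\log\tau|^{-1}=O(1/|n|)$ locally uniformly, giving absolute convergence away from the pole set. Note also that both you and the paper should say the poles are \emph{contained} in the stated lattice: the residue at the $n$-th point is proportional to the $n$-th Fourier coefficient of $G$ and may vanish. Your handling of $\gamma\neq 0$ by Taylor-expanding $e^{-\gamma t}$ is the same device the paper mentions inside Lemma~\ref{lem-L}; the paper's proof of Theorem~\ref{thm:funcequal} instead deduces the $-2m$ translations from the functional equation, so here the two arguments are interchangeable.

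There is one substantive discrepancy you should not leave silent: what you prove is $\frac{d}{d\gamma}\zeta(s,\gamma)=-\frac{s}{2}\,\zeta(s+2,\gamma)$ (equivalently $\frac{d}{d\gamma}\zeta(2s,\gamma)=-s\,\zeta(2s+2,\gamma)$, the constant being $\Gamma(s+1)/\Gamma(s)=s$), whereas the displayed equation \eqref{l-dif} asserts the factor $-\gamma$. Your version is the correct one: termwise differentiation of $\sum_l(\lambda_l+\gamma)^{-s/2}$ gives $-\frac{s}{2}$, and differentiation under the Mellin integral gives the same, so the paper's own proof (differentiating $I_1$, $I_2$, $I_3$ under the integral sign) also yields $-\frac{s}{2}$; the $-\gamma$ in the statement is an error in the paper, not something any correct argument can reproduce. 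As a referee-style point, your write-up should say explicitly that the identity being established differs from the printed one; as it stands you prove a (true) functional equation while appearing to claim the (false) one verbatim.
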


\begin{proof}Similarly to the previous result, this is implied by Lemma~\ref{lem-L} and \cite{Hambly2010} and \cite[Theorem 7.7 and Corollary 7.8]{Kajino2010} (See Theorem \ref{thm:kajino} and Corollary \ref{cor:kajino}.)
The same argument, as in the proof of Lemma~\ref{lem-L}, for differentiating under the integral for $I_1(s,\gamma)$ applies also to $I_2(s,\gamma)$ and $I_3(s,\gamma)$ with the same functional equation. The location of the poles when $\gamma = 0$ is observed from directly summing the series in (\ref{eq:I1series}) when $\gamma = 0$. When $\gamma \neq 0$ the poles when $m=0$ are obtained from the  same estimate, and the translations of poles by $2m$ is forced by the functional equation. 
\end{proof}

\begin{remark} In the case of the standard Sierpinski carpet $2\frac{d_f}{d_w} - 2$ will be less than $2\frac{d_{\partial}}{d_w}$ so that there are no extra poles in the right half-plane.
In fact, the spectral zeta function associated to the self-similar Laplacian on the Sierpinski carpet has a meromorphic extension to the whole complex plane  because, by the work \cite
{KajinoAMS} of Kajino, the conditions of Theorem \ref{ExpBounds} are satisfied. Moreover, the same is true for large classes of fractals, such as nested fractals and generalized Sierpinski carpets where the values of $d_k$ have a geometric meaning. For example $d_0 = d_f$ and $d_k$ is the Minkowski dimension of the co-dimension $k$ faces of the carpet and $d_d = 0$ is the Minkowski dimension of the single point that is a co-dimension $d$ face of the carpet. 
\end{remark}

\begin{conjecture}
For fully symmetric finitely ramified fractals, even without heat kernel estimates,  the spectral zeta function with $\gamma = 0$  has a meromorphic continuation to $\mathbb C$ with at most two sequences  of poles, also called spectral dimensions, at $Re(s)=d_S$ and $Re(s)=0$. This applies for the usual Dirichlet Laplacian, and for the Neumann Laplacian if the zero eigenvalue is excluded.  
\end{conjecture}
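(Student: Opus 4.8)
The plan is to bypass the heat kernel estimates entirely and instead exploit the exact self-similar structure of the spectrum furnished by spectral decimation on fully symmetric finitely ramified fractals, in the spirit of the complex-dynamics approach of \cite{Teplyaev2007}. First I would recall that on such a fractal the spectrum of $\Delta$ is generated by iterating the inverse branches of a fixed rational spectral decimation function $R$: there is a finite set $W$ of ``initial'' eigenvalues, together with finitely many genuinely exceptional eigenvalues, and every remaining eigenvalue has the form $\lambda_{m,w}=\tau^m\,R^{-m}(w)$ for some $m\ge 0$ and $w\in W$, where $R^{-m}$ is an appropriate composition of inverse branches converging to a repelling fixed point whose multiplier is governed by $\tau$. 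The double transitivity of $\F G$ on $V_0$ forces the initial data and the multiplicities to fall into finitely many orbits, and in each family the multiplicity of the generation-$m$ eigenvalues grows like $N^{\epsilon_w m}$ with $\epsilon_w\in\{0,1\}$: the ``bulk'' families ($\epsilon_w=1$) and the bounded-multiplicity ``localized''/boundary families ($\epsilon_w=0$) arising from the gaps in the spectrum, cf.\ \cite{HSTZ}.

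Granting this, the next step is to split $\zeta(s,0)=\sum_l\lambda_l^{-s/2}$ into a finite sum of partial zeta functions, one per family, plus an entire contribution from the finitely many exceptional eigenvalues. If the eigenvalues were exactly $\tau^m\lambda_w$ with multiplicity $c_w N^{\epsilon_w m}$, each family would contribute
\begin{equation}
c_w\lambda_w^{-s/2}\sum_{m\ge 0}\bigl(N^{\epsilon_w}\tau^{-s/2}\bigr)^m=\frac{c_w\lambda_w^{-s/2}}{1-N^{\epsilon_w}\tau^{-s/2}},
\end{equation}
which is meromorphic on all of $\mathbb{C}$ with poles exactly where $N^{\epsilon_w}\tau^{-s/2}=1$. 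For $\epsilon_w=1$ this gives $s=d_S+\frac{4\pi i n}{\log\tau}$, and for $\epsilon_w=0$ it gives $s=\frac{4\pi i n}{\log\tau}$, i.e.\ the line $Re(s)=0$. These are precisely the two stated sequences, consistent with Theorem~\ref{thm:funcequal}; the pole at $s=0$ itself is cancelled by the zero of $1/\Gamma(s)$ in the Mellin representation, while the off-axis poles on $Re(s)=0$ survive. The Neumann statement follows by discarding the single zero eigenvalue, which alters only finitely much data.

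The main obstacle is that the eigenvalues are \emph{not} exactly $\tau^m\lambda_w$: the nonlinearity of $R$ makes $R^{-m}(w)=\lambda_\infty+O(\text{nonlinear corrections})$ as $m\to\infty$, so each family is only a \emph{perturbed} geometric series $\sum_m(\text{mult})\,\bigl(\tau^m R^{-m}(w)\bigr)^{-s/2}$. The hard part---exactly the point where heat kernel estimates were used to shortcut the proofs of the earlier theorems---is to show that this perturbation creates no new sequences of poles drifting toward $-\infty$. I would attack it by factoring each partial zeta function as the clean geometric factor above times a correction factor $\prod_{m\ge 0}\bigl(R^{-m}(w)/\lambda_\infty\bigr)^{-s/2}$, and then proving that the correction is entire in $s$. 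This reduces to controlling the rate of convergence $R^{-m}(w)\to\lambda_\infty$, which is geometric with ratio the reciprocal multiplier of $R$ at the fixed point; the associated product should converge locally uniformly on all of $\mathbb{C}$ precisely because $R$ is uniformly expanding near that fixed point.

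Making this convergence rigorous---uniformly over the finitely many families and for the full complex variable $s$ rather than merely for $Re(s)>-\epsilon$ as in \cite{Teplyaev2007}---is the crux of the matter, and is exactly what forces the statement to remain a conjecture in the absence of Kajino's heat kernel input. I expect the decisive estimate to be a bound showing that the logarithm of the correction factor is dominated, on compact sets of $s$, by a convergent series in the contraction rate of the inverse dynamics, so that no extra poles are introduced and the only singularities of $\zeta(s,0)$ are the two sequences on $Re(s)=d_S$ and $Re(s)=0$.
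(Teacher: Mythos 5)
The statement you are addressing is a \emph{conjecture} in the paper: the authors offer no proof of it, and indeed the point of stating it as a conjecture is precisely that the known techniques (Kigami--Lapidus renewal theory, the complex-dynamics continuation of \cite{Teplyaev2007}, and Kajino's heat-kernel-based results) do not reach a continuation to all of $\mathbb{C}$ for general fully symmetric finitely ramified fractals. Your proposal is therefore not being measured against a hidden argument in the paper; it must stand on its own, and it does not: by your own admission, the ``decisive estimate'' showing that the nonlinear corrections to spectral decimation introduce no new poles is left as something you ``expect,'' which is exactly the open problem. What you have written is a plausible research program (and essentially the program suggested by \cite{Teplyaev2007} and \cite{HSTZ}), not a proof.

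Beyond the admitted incompleteness, there is a concrete algebraic error in the one step where you claim traction. A family contributes a sum $\sum_{m} c_w N^{\epsilon_w m}\bigl(\tau^m R^{-m}(w)\bigr)^{-s/2}$, and you propose to factor this as the exact geometric series times the correction $\prod_{m\ge 0}\bigl(R^{-m}(w)/\lambda_\infty\bigr)^{-s/2}$. But a sum of termwise-perturbed quantities does not factor this way: $\sum_m a_m b_m \neq \bigl(\sum_m a_m\bigr)\bigl(\prod_m b_m\bigr)$. The honest computation writes $\bigl(R^{-m}(w)\bigr)^{-s/2}=\lambda_\infty^{-s/2}(1+\delta_m)^{-s/2}$ with $\delta_m=O(\rho^{-m})$ and expands $(1+\delta_m)^{-s/2}$ term by term; each order of the expansion extends the continuation only by a further strip of width $2\log\rho/\log\tau$ and produces a new candidate line of poles at each stage, with the candidate lines marching off to $-\infty$. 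Showing that these are not genuine poles (or that the iterated expansion can be resummed without singularities) is precisely the obstruction, and your proposal gives no mechanism for it. A secondary inconsistency: you invoke the zero of $1/\Gamma(s)$ to cancel the pole at $s=0$, but the Gamma factor belongs to the Mellin-transform representation, not to the direct eigenvalue-sum decomposition you are using, so within your own setup the cancellation claim has no basis; note also that the paper's conjecture deliberately allows a sequence of poles on $Re(s)=0$.
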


\begin{conjecture}
For  generalized Sierpinski carpets the possible poles of the spectral zeta function with real part $2\frac{d_k}{d_w}$,  with $k=1,\ldots, d-1$, are actually removable singularities  because there are different self-similar (graph-directed) structures that yield the same Laplacian operator. This applies for the usual   Neumann Laplacian if the zero eigenvalue is excluded. For the Dirichlet Laplacian the dimension of the boundary will play a role in the spectral asymptotics.  
\end{conjecture}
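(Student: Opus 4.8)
The plan is to turn the structure-independence of the operator into a statement about residues. Although $\zeta(s,\gamma)$ is fixed once $L$ is fixed---it is read off from the single function $Z_L(t)=\mathrm{Tr}(e^{-tL})$ through the Mellin transform---the renewal/heat-kernel analysis of \cite{Hambly2010} and \cite[Theorem 7.7 and Corollary 7.8]{Kajino2010} produces a meromorphic continuation whose candidate poles are organized by the face lattice of the carpet: the line $\mathrm{Re}(s)=2d_0/d_w=d_S$ carries the leading Weyl pole, while the intermediate lines $\mathrm{Re}(s)=2d_k/d_w$, $k=1,\dots,d-1$, are exactly the spectral dimensions of the co-dimension $k$ faces, which are themselves lower-dimensional generalized Sierpinski carpets. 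The key observation I would exploit is that each such face is \emph{internal}: in the self-similar decomposition $F=\bigcup_{j}\psi_j(F)$ every co-dimension $k$ face with $1\le k\le d-1$ is shared between two or more adjacent cells and is not part of the outer boundary of $F$.

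First I would set up a graph-directed description in which the carpet is one state of a finite system whose other states are its faces, and write the corresponding coupled renewal equation for $Z_L(t)$. This is the concrete meaning of the phrase ``different self-similar (graph-directed) structures that yield the same Laplacian'': the plain $N$-cell description and the face-coupled description compute the \emph{same} $Z_L(t)$, hence the \emph{same} $\zeta(s,\gamma)$, so any continuation furnished by either is a continuation of the one function. I would then run an inclusion--exclusion over the face lattice, exactly as in a Mayer--Vietoris bookkeeping of heat traces: the contribution of each internal face enters with the multiplicity with which cells meet along it, and for the Neumann Laplacian (with the zero eigenvalue excluded) these shared contributions should telescope and cancel. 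Cancellation of the singular term attached to the line $\mathrm{Re}(s)=2d_k/d_w$ means precisely that the residue there vanishes, i.e. the singularity is removable, while the leading line $\mathrm{Re}(s)=d_S$, being common to every decomposition and unshared, survives.

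The Neumann/Dirichlet dichotomy stated in the conjecture would then fall out of the same bookkeeping. Under Neumann conditions the interior faces carry no net boundary term and cancel as above; under Dirichlet conditions the \emph{outer} boundary of $F$ is not shared between decompositions and contributes a genuine Weyl surface term at the scale $2d_\partial/d_w$, which therefore cannot be removed---this is why the conjecture asserts removability only in the Neumann case and a genuine role for $d_\partial$ in the Dirichlet case. The vertex term $d_d=0$ should be handled together with the boundary analysis, consistently with the two sequences of poles ($\mathrm{Re}(s)=d_S$ and $\mathrm{Re}(s)=0$) already isolated for finitely ramified fractals.

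I expect the main obstacle to sit at the two steps that make the cancellation legitimate. One must actually construct the face-coupled graph-directed structure on a given generalized Sierpinski carpet and \emph{prove} that it defines the identical self-adjoint $L$; this rests on the uniqueness of the Dirichlet form and Brownian motion on such carpets \cite{BB,BBKT}, combined with a nontrivial compatibility between distinct cell decompositions and the same form. Beyond merely matching leading exponents, one must verify that the shared-face coefficients genuinely sum to zero rather than to a nonzero residue, and on a fractal there is no smooth boundary calculus to make this automatic; carrying the inclusion--exclusion through rigorously will, I expect, require the sharp short-time expansion of $Z_L(t)$ from Kajino's forthcoming full meromorphic continuation \cite{KajinoAMS}. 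This residue-cancellation step is the technical heart of the conjecture and the place where the argument is most likely to stand or fall.
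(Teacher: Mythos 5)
This statement is a \emph{conjecture} in the paper: the authors supply no proof, only the heuristic that the same Laplacian can be realized by different self-similar (graph-directed) structures, together with the supporting evidence of the standard carpet's two realizations and of the Laakso spaces in \cite{BDMS}, where the zeta function is computed explicitly. So there is no proof in the paper to compare yours against, and your text is likewise a research program rather than a proof; the relevant question is whether the program could plausibly close the gap. It has two genuine holes. First, as you yourself flag, you never construct the face-coupled graph-directed structure nor prove that it induces the identical self-adjoint operator $L$. The uniqueness theorem of \cite{BBKT} asserts uniqueness of the symmetric self-similar Dirichlet form with respect to the \emph{standard} cell structure; showing that a form assembled from a coupled system of carpet-and-face states lies in the class covered by that theorem is itself a substantial open problem, not a routine appeal to the literature.

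Second, and more seriously, the two halves of your argument undercut each other. The ``same function, two continuations'' principle can only remove poles if the two structures produce \emph{different} candidate pole sets, since the actual poles of the single function $\zeta(s,\gamma)$ must lie in the intersection. Your face-coupled structure is built from the very same faces as the plain $N$-cell structure, so it yields candidate poles on exactly the same lines $\mathrm{Re}(s)=2d_k/d_w$ with the same imaginary lattice; the intersection argument therefore gains nothing, which is why you are forced to fall back on a Mayer--Vietoris cancellation. But on a fractal there is no local parametrix or boundary calculus to support such bookkeeping, and by the Mellin-transform analysis of Lemma~\ref{lem-L} and Lemma~\ref{ExpBounds}, removability of all poles on the line $\mathrm{Re}(s)=2d_k/d_w$ is equivalent to the identical vanishing of the log-periodic coefficient $G_k$ in the short-time expansion of $Z_L(t)$ (the residues are, up to the nonvanishing factor $1/\Gamma(s)$ in the right half-plane, the Fourier coefficients of $G_k$). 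Asserting that the shared-face contributions ``telescope and cancel'' is thus precisely asserting $G_k\equiv 0$, i.e.\ it restates the conjecture rather than proving it. Note also a subtlety that your plan (and indeed the paper's heuristic) must eventually confront: even for two structures with genuinely different scaling data, the real points $s=2d_k/d_w$ belong to both candidate lattices whenever both decompositions see faces of the same Minkowski dimension, so the intersection argument alone cannot remove the poles at $n=0$; some additional input, such as the sharp expansion announced in \cite{KajinoAMS}, is unavoidable there.
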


The two dimensional standard Sierpinski carpet can be realized by two such structures and in this case it is conjectured that there are only two sequences of poles one at $Re(s) = 2\frac{d_f}{d_w}$ and the other at $Re(s) = 0$. This has been observed in the case of some Laakso spaces in \cite{BDMS}. 

\begin{lemma}\label{lem-L}
Suppose that $d_{\partial} < d_f$ and for $t <1$
\begin{equation}
	c_1t^{-d_{\partial}/d_w} \le t^{-d_f/d_w}G\left( \log \frac{1}{t} \right) - Z_L(t) \le c_2t^{-d_{\partial}/d_w}
\end{equation}
where $G$ is a periodic function bounded above and away from zero with period $\log( \tau  )>0$, while for $t \ge 1$ there exist $c_3,c_4 \ge 0$ such that
\begin{equation}
	 |Z_L(t)| \le c_3e^{-c_4t}.
\end{equation}
Then, for any $\gamma > -c_4$, $\zeta(s,\gamma)$ has a meromorphic continuation for $Re(s)>2\frac{d_{\partial}}{d_w}$.
\end{lemma}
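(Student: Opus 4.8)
The plan is to represent $\zeta(s,\gamma)$ by the inverse Mellin transform and to split the integral at $t=1$ into three pieces whose analytic behavior can be analyzed separately. Starting from
\[
\zeta(2s,\gamma)=\frac{1}{\Gamma(s)}\int_0^\infty t^{s-1}Z_L(t)e^{-\gamma t}\,dt,
\]
which is valid where the defining series converges absolutely (namely $\mathrm{Re}(s)>d_f/d_w$, i.e. $\mathrm{Re}(2s)>d_S$), I would use the hypothesis on $(0,1)$ to substitute $Z_L(t)=t^{-d_f/d_w}G(\log(1/t))-R(t)$ with $c_1t^{-d_\partial/d_w}\le R(t)\le c_2t^{-d_\partial/d_w}$, and set
\[
I_1(s,\gamma)=\int_0^1 t^{s-1-d_f/d_w}G(\log(1/t))e^{-\gamma t}\,dt,\qquad I_2(s,\gamma)=-\int_0^1 t^{s-1}R(t)e^{-\gamma t}\,dt,
\]
together with $I_3(s,\gamma)=\int_1^\infty t^{s-1}Z_L(t)e^{-\gamma t}\,dt$, so that $\zeta(2s,\gamma)=\frac{1}{\Gamma(s)}(I_1+I_2+I_3)$. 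Since $1/\Gamma(s)$ is entire, the meromorphic structure is carried entirely by the three integrals.

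The tail and the remainder are the routine pieces. For $I_3$ the bound $|Z_L(t)|\le c_3e^{-c_4t}$ with $\gamma>-c_4$ gives $|t^{s-1}Z_L(t)e^{-\gamma t}|\le c_3 t^{\mathrm{Re}(s)-1}e^{-(c_4+\gamma)t}$ with $c_4+\gamma>0$, so $I_3$ converges for every $s$ and, by differentiation under the integral sign, is entire. For $I_2$ the two-sided power bound on $R$ yields $|t^{s-1}R(t)e^{-\gamma t}|\le Ct^{\mathrm{Re}(s)-1-d_\partial/d_w}$ on $(0,1]$, so $I_2$ converges absolutely and is holomorphic exactly for $\mathrm{Re}(s)>d_\partial/d_w$; because $R(t)\ge c_1t^{-d_\partial/d_w}>0$, this half-plane is the genuine limit of the estimate, which is precisely why the continuation is only asserted for $\mathrm{Re}(2s)>2d_\partial/d_w$. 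Neither $I_2$ nor $I_3$ contributes poles in this region.

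The main term $I_1$ carries all the poles. I would substitute $t=e^{-u}$ to obtain $I_1(s,\gamma)=\int_0^\infty e^{-(s-d_f/d_w)u}G(u)e^{-\gamma e^{-u}}\,du$, expand $e^{-\gamma e^{-u}}=\sum_{m\ge0}\frac{(-\gamma)^m}{m!}e^{-mu}$, and exploit the periodicity of $G$ (period $\log\tau$) through the identity $\int_0^\infty e^{-pu}G(u)\,du=\frac{1}{1-e^{-p\log\tau}}\int_0^{\log\tau}e^{-pu}G(u)\,du$ valid for $\mathrm{Re}(p)>0$. This produces the series representation
\begin{equation}\label{eq:I1series}
I_1(s,\gamma)=\sum_{m\ge0}\frac{(-\gamma)^m}{m!}\,\frac{\int_0^{\log\tau}e^{-(s-d_f/d_w+m)u}G(u)\,du}{1-e^{-(s-d_f/d_w+m)\log\tau}},
\end{equation}
in which every summand is meromorphic on all of $\mathbb{C}$ with simple poles where $s-d_f/d_w+m\in\frac{2\pi i}{\log\tau}\mathbb{Z}$, that is at $s=d_f/d_w-m+\frac{2\pi in}{\log\tau}$, equivalently at $2s=d_S-2m+\frac{4\pi in}{\log\tau}$. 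I want to stress that using the Laplace-transform-of-a-periodic-function identity rather than a Fourier expansion of $G$ is what lets me avoid any smoothness or decay assumptions on $G$: only the assumed boundedness of $G$ on a single period enters.

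The one genuine obstacle is to show that (\ref{eq:I1series}) converges locally uniformly off its poles, so that it really defines a meromorphic function. For this I would fix a compact set avoiding the poles, bound each numerator uniformly using $|G|\le\sup G$, note that $\mathrm{Re}(s-d_f/d_w+m)\to+\infty$ forces the denominators to stay bounded away from $0$ for large $m$, and conclude that the factor $(-\gamma)^m/m!$ makes the series absolutely and uniformly convergent there. Assembling the pieces, $\zeta(2s,\gamma)=\frac{1}{\Gamma(s)}(I_1+I_2+I_3)$ is meromorphic for $\mathrm{Re}(s)>d_\partial/d_w$, hence $\zeta(s,\gamma)$ is meromorphic for $\mathrm{Re}(s)>2d_\partial/d_w$, with poles only at the locations coming from (\ref{eq:I1series}); when $\gamma=0$ only the $m=0$ term survives and one is left with the single sequence at $2s=d_S+\frac{4\pi in}{\log\tau}$. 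The same uniform-convergence estimate applied to $\partial_\gamma I_1$, and the analogous bounds for $\partial_\gamma I_2$ and $\partial_\gamma I_3$, also justify differentiating under the integral, supplying the input needed for the functional equation in Theorem~\ref{thm:funcequal}.
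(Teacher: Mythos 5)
Your proposal is correct, and it shares the paper's overall architecture: the same Mellin-transform split at $t=1$ into $I_1+I_2+I_3$ (with the bounded functions $B$, $C$ absorbing the two-sided estimates), the same observation that $I_3$ is entire for $\gamma>-c_4$ and that $I_2$ is holomorphic for $\mathrm{Re}(s)>d_\partial/d_w$. Where you genuinely diverge from the paper is in the continuation of the main term $I_1$. The paper substitutes $t\mapsto\tau^T$, splits the integral into a sum over period intervals, bounds that sum by a geometric series to get analyticity for $\mathrm{Re}(s)>d_f/d_w$, and then pushes the continuation leftward by a bootstrap: differentiating under the integral in $\gamma$ (the source of the functional equation \eqref{l-dif}), noting that each $\gamma$-derivative inserts a factor of $t$ and hence converges for $\mathrm{Re}(s)>d_f/d_w-l$ after $l$ derivatives, and recovering $I_1$ there by integrating back in $\gamma$. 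You instead expand $e^{-\gamma e^{-u}}$ in its Taylor series and exploit periodicity through the Laplace-transform identity for periodic functions, arriving at a closed-form series whose terms are globally meromorphic with visible simple poles at $s=d_f/d_w-m+\frac{2\pi i n}{\log\tau}$ and whose tail converges locally uniformly thanks to the $1/m!$ factor. Your route buys two things: it is rigorous at the one point where the paper is loose (the paper infers meromorphy of $I_1$ from the fact that a \emph{bound} on $|I_1|$ extends meromorphically, which is not by itself a valid inference; your explicit series repairs this), and it makes the pole locations, including the shift by $-2m$ when $\gamma\neq 0$, completely transparent --- which is exactly what the proof of Theorem~\ref{thm:funcequal} appeals to when it invokes ``directly summing the series in \eqref{eq:I1series}.'' The paper's route buys the $\gamma$-differentiation mechanism in a form that transfers directly to Lemma~\ref{ExpBounds}, but you lose nothing there since the functional equation also follows by differentiating your series term by term. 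One small caution: your parenthetical claim that the lower bound $R(t)\ge c_1t^{-d_\partial/d_w}$ makes $\mathrm{Re}(s)>d_\partial/d_w$ the ``genuine limit'' only shows that the integral defining $I_2$ diverges past that line, not that $\zeta$ admits no continuation beyond it by other means; this is consistent with the lemma's one-sided assertion but should not be stated as optimality.
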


\begin{proof}
Note that inverse Mellin transformations are linear so that we may transform each of the asymptotics separately. By assumption there exist bounded measurable functions $B(t)$ and $C(t)$ such that for $t<1$
\begin{equation}
	Z_L(t) =  t^{-d_f/d_w}G\left(  \log \frac{1}{t} \right) + B(t)t^{-d_{\partial}/d_w}
\end{equation}
and for $t \ge 1$
\begin{equation}
	Z_L(t) = C(t)e^{-c_4t}.
\end{equation}
Then the Mellin transform of $Z_L(t)e^{-\gamma t}$ is 
{\small
\begin{align}
	\zeta(2s,\gamma)=&\  \frac{1}{\Gamma(s)}\int_0^{1} t^{s-1}t^{-d_f/d_w}G\left(  \log \frac{1}{t} \right) e^{-\gamma t} B(t)\ dt&\\
	 &+ \frac{1}{\Gamma(s)}\int_0^{1} t^{s-1}B(t)t^{-d_{\partial}/d_w} e^{-\gamma t}\ dt&\\
	 &+ \frac{1}{\Gamma(s)} \int_1^{\infty} t^{s-1}C(t)e^{-c_4t}e^{-\gamma t}\ dt \\
	  = &\   
	  I_1(s,\gamma) + 
	  I_2(s, \gamma) + 
	  I_3(s,\gamma).
\end{align}
}
Since $B(t)$ and $C(t)$ are bounded functions, they do not contribute to the divergence or convergence of these integrals and may be ignored without loss of generality. Note that for all $\gamma \in \B{R}$, $I_1(s,\gamma)$ converges for $Re(s) > \frac{d_f}{d_w}$ and $I_2(s,\gamma)$ converges for $Re(s) > \frac{d_{\partial}}{d_w}$, while $I_3(s,\gamma)$ converges for all $s \in \B{C}$ and $\gamma > -c_4$. It suffices to show that $I_1(s,\gamma)$ can be meromorphically extended to $Re(s) > \frac{d_{\partial}}{d_w}$.

Let $\log( \tau  )$ be the period of $G(T)$ so that $G(\log( \tau  ) T)$ has period 1 in the variable $T$. Recall that $\tau$ is the time scaling factor. Using the change of variables $t \mapsto  \tau^{  T}$ then
\begin{align}
	I_1(s,\gamma) &= \frac{\log( \tau  )}{\Gamma(s)} \int_{-\infty}^{0} ( \tau^{  T})^{s-d_f/d_w}e^{-\gamma  \tau^{  T}}G(\log( \tau  ) T)\ dT \label{eq:l1series1}\\
	&= \frac{\log( \tau  )}{\Gamma(s)} \sum_{p=-\infty}^{-1} \int_p^{p+1} \tau^{  T(s-d_f/d_w)}e^{-\gamma  \tau^{  T}}G(\log( \tau  ) T)\ dT. \label{eq:I1series}
\end{align}
The issue of convergence is only at $T= -\infty$ and thus the integral $I_1(s,\gamma)$ will converge if the summation converges absolutely. 
This can be established by using the Taylor series in $\gamma$. Moreover if the integral  $I_1(s,\gamma)$ converges for a specific pair $(s,\gamma)$ it will be analytic in $s$ in some small neighborhood of $s$ for that value of $\gamma$. 
Note that if  $s = x+iy$ with $x> \frac{d_f}{d_w}$, then
$$
	|I_1(s,\gamma)| 
	\le  \frac{\log( \tau  )}{|\Gamma(s)|} G^{*} \max\{ 1, e^{-\gamma} \}\sum_{p=-\infty}^{-1} \tau^{  (p+1)(x-d_f/d_w)}
$$
where $G^{*} = \int_p^{p+1} G(\log( \tau  ) T)\ dt$ which is independent of $p$ by the periodicity of $G(\log( \tau  ) T)$. This last sum is geometric in $p$ so if $x$ is replaced by $s = x+iy$ this bound on $|I_1(s,\gamma)|$ has a meromorphic extension to the complex plane with poles at $s = \frac{d_f}{d_w} + \frac{2i \pi n}{\log( \tau  )}$ for all fixed $\gamma \in \B{R}$.

The integrand in (\ref{eq:l1series1}) is for $\gamma > -c_4$ smooth in $\gamma$ and bounded by a $T-$integrable function independent of $\gamma$ in the region $Re(s)> \frac{d_f}{d_w}$. 
One can then take the derivative with respect to $\gamma$ inside the integral and obtain \eqref{l-dif}. 
Repeating this argument it is possible to find $\frac{d^{l}}{d\gamma^{l}} I_1(s,\gamma)$ iteratively for $\gamma > -c_4$ and $Re(s) > \frac{d_f}{d_w}-l$. Since $I_1(s,\gamma)$ analytic in a right half-plane this implies that it varies smoothly in $\gamma \in (-c_4,\infty)$ and $I_1(s,\gamma)$ can be recovered by integrating $\frac{d^{l}}{d\gamma^{l}} I_1(s,\gamma)$ over $\gamma$. This not only gives a meromorphic extension of $I_1(s,\gamma)$ to $Re(s)> \frac{d_{\partial}}{d_w}$ but also to the whole complex plane (see next lemma for the use of this fact). Notice that if $Re(s) > \frac{d_f}{d_w} - l$ this definition is consistent with the definition of $I_1(s,\gamma)$ for $Re(s) > \frac{d_f}{d_w}$.
\end{proof}

\begin{lemma}\label{ExpBounds}
Suppose that for $t < 1$
\begin{equation}
	Z_L(t) =  \sum_{k=0}^{d} t^{-\frac{d_k}{d_w}}G_k\left(  \log \frac{1}{t} \right)  + O\left(
	\exp{(-ct^{-\frac1{d_w-1}})}\right)
\end{equation}
where the $G_k$ are periodic functions bounded above, and for $t\ge 1$ there exist $c_5,c_6 \ge 0$ such that
\begin{equation}
	-c_5e^{-c_6t} \le Z_L(t) \le c_3e^{-c_6t}.
\end{equation}
Then $\zeta(s,\gamma)$ has a meromorphic continuation to the complex plane for $\gamma > -c_6$. 
\end{lemma}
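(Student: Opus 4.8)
The plan is to run the Mellin-transform argument from the proof of Lemma~\ref{lem-L}, using linearity of the inverse Mellin transform to handle the finite sum term by term, and then to observe that the super-polynomially small remainder is precisely what upgrades the conclusion from a half-plane to all of $\B{C}$. Splitting the Mellin integral at $t=1$, I would first write
\[
	\zeta(2s,\gamma) = \frac{1}{\Gamma(s)}\int_0^{1} t^{s-1}Z_L(t)e^{-\gamma t}\,dt + \frac{1}{\Gamma(s)}\int_1^{\infty} t^{s-1}Z_L(t)e^{-\gamma t}\,dt,
\]
and on $(0,1)$ decompose $Z_L(t)$ into the $d+1$ summands $t^{-d_k/d_w}G_k(\log\frac1t)$ together with the remainder $R(t)=O(\exp(-ct^{-1/(d_w-1)}))$. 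Each piece can then be continued separately and the results added.

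For each $k$ the integral $J_k(s,\gamma)=\frac{1}{\Gamma(s)}\int_0^{1} t^{s-1}t^{-d_k/d_w}G_k(\log\frac1t)e^{-\gamma t}\,dt$ is exactly of the form of $I_1(s,\gamma)$ from Lemma~\ref{lem-L}, with $d_f$ replaced by $d_k$. I would therefore reuse the final paragraph of that proof: after the change of variables $t\mapsto\tau^{T}$ as in \eqref{eq:l1series1}--\eqref{eq:I1series}, $J_k$ converges for $Re(s)>d_k/d_w$, and differentiating under the integral in $\gamma$ while applying the functional equation \eqref{l-dif} iteratively to reach $Re(s)>d_k/d_w-l$ for every $l$ extends it to a meromorphic function on all of $\B{C}$. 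Summing the geometric series in \eqref{eq:I1series} locates the poles of $J_k$ along $d_k/d_w + \frac{2\pi i n}{\log\tau}$ when $\gamma=0$, while for $\gamma\neq 0$ the $\gamma$-expansion adds their leftward integer translates, forced by \eqref{l-dif} exactly as in Theorem~\ref{thm:funcequal}. Since there are only finitely many $k$, the finite sum $\sum_{k=0}^{d}J_k$ is meromorphic on $\B{C}$ with poles confined to finitely many such arithmetic families and hence no accumulation point in $\B{C}$.

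The genuinely new point, and the reason this conclusion is stronger than that of Lemma~\ref{lem-L}, is the treatment of the two remainders. Because $d_w>1$, the exponent $-1/(d_w-1)$ is negative, so $\exp(-ct^{-1/(d_w-1)})$ decays faster than every power of $t$ as $t\to 0^{+}$; consequently $\int_0^{1} t^{s-1}R(t)e^{-\gamma t}\,dt$ converges absolutely for every $s\in\B{C}$ and is entire in $s$, and multiplication by the entire factor $1/\Gamma(s)$ preserves this. Likewise the two-sided bound $-c_5e^{-c_6t}\le Z_L(t)\le c_3e^{-c_6t}$ for $t\ge1$ makes the tail integral converge for all $s\in\B{C}$ whenever $c_6+\gamma>0$, that is $\gamma>-c_6$, and it too is entire. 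In contrast to the merely bounded coefficient $B(t)$ of Lemma~\ref{lem-L}, whose contribution $I_2(s,\gamma)$ could be controlled only for $Re(s)>d_\partial/d_w$, these remainders impose no restriction whatsoever on $Re(s)$.

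Assembling the pieces, $\zeta(2s,\gamma)$ is a finite sum of functions meromorphic on $\B{C}$ plus two entire functions, hence meromorphic on $\B{C}$, and rescaling $2s\mapsto s$ yields the assertion for $\zeta(s,\gamma)$ with $\gamma>-c_6$. I expect no essential obstacle here, only the bookkeeping of justifying the differentiation under the integral sign in the $\gamma$-iteration for each $J_k$; this is the same domination estimate already carried out for $I_1$ in Lemma~\ref{lem-L}, which transfers unchanged because every $G_k$ is bounded above and periodic with period $\log\tau$.
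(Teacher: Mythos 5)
Your proposal follows essentially the same route as the paper's proof: split the Mellin integral at $t=1$, continue each of the $d+1$ terms $t^{-d_k/d_w}G_k(\log\frac1t)$ by rerunning the $I_1$ argument of Lemma~\ref{lem-L} (including the iterated $\gamma$-differentiation via \eqref{l-dif} to reach the whole plane, with pole towers of real part $\frac{d_k}{d_w}$), and verify that the two exponential remainders impose no restriction beyond $\gamma>-c_6$, so the finite sum of pole towers gives a meromorphic $\zeta(s,\gamma)$ on $\B{C}$. The one localized difference is the small-$t$ remainder: the paper invokes the known Gamma-function form of the Mellin transform of $\exp(-ct^{-1/(d_w-1)})$ over $(0,\infty)$, whereas you show directly that the integral over $(0,1)$ is entire because of the super-polynomial decay at $t\to0^+$ -- a cleaner treatment that is also better matched to the $O(\cdot)$ hypothesis, since only a bound on, not a formula for, the remainder is available.
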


\begin{proof}
The technique for handling the $I_1$ term in Lemma \ref{lem-L} is repeated for each of the $t^{-\frac{d_k}{d_w}}G_k\left(  \log \frac{1}{t} \right)$ terms with their respective periods, giving their Mellin transforms and meromorphic continuations. Each of these meromorphic function have poles at $s = \frac{d_k}{d_w}  - 2m + \frac{2i\pi n}{\log( \tau  )}$ for $n \in \mathbb{Z}$ and $m \ge 0$. There is no analogue of the $I_2(s,\gamma)$ term. The $I_3(s,\gamma)$ term of Lemma \ref{lem-L} is now replaced with a term of the form
\begin{equation}
	\frac{1}{\Gamma(s)} \int_0^{1} t^{s-1}e^{(-ct^{-\frac1{d_w-1}})} e^{-\gamma t}\ dt,
\end{equation}
which converges if 
the same integral from $0$ to $\infty$ converges. It is known that the inverse Mellin transform of such an exponential term is the product of a complex exponential with base $c$ and a shifted Gamma function which is meromorphically extendable to the whole plane with well known poles in the left half-plane. The existence of the meromorphic extension of the integral of $t^{s-1}e^{-c_6t-\gamma t}$ is standard and also  
is precisely the argument of Lemma \ref{lem-L} concerning the $I_3$ term. The sum of these meromorphic function has discrete poles in a finite number of towers that do not accumulate thus $\zeta(s,\gamma)$ is meromorphic with complex dimensions whose real parts are given by $\frac{d_k}{d_w}$, where $d_0 = d_f$ and $G_0$ is not identically zero. Thus a meromorphic extension of $\zeta(2s,\gamma)$ can be given for the complex plane provided that $\gamma > -c_6$.
\end{proof}

\begin{corollary}
Under the 
assumptions 
of Lemma \ref{ExpBounds}, 
the functional equation of Theorem~\ref{thm:funcequal} holds in the whole complex plane. 
\end{corollary}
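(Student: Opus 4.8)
The plan is to reduce the statement to an application of the identity theorem for meromorphic functions, after upgrading the differentiation-under-the-integral computation of Lemma~\ref{lem-L} to the full-plane setting of Lemma~\ref{ExpBounds}. First I would recall that, under the hypotheses of Lemma~\ref{ExpBounds}, the proof of that lemma decomposes $\Gamma(s)\zeta(2s,\gamma)$ into a finite sum of $I_1$-type Mellin integrals, one for each periodic term $t^{-d_k/d_w}G_k(\log(1/t))$, together with the integral arising from the exponential error term, and that each of these pieces is shown to extend meromorphically to all of $\mathbb{C}$ for $\gamma>-c_6$. Hence $\zeta(s,\gamma)$ itself, and therefore its shift $\zeta(s+2,\gamma)$, is meromorphic in $s$ on $\mathbb{C}$ for each such $\gamma$.

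Next I would apply, essentially verbatim, the estimate used for $I_1$ in the proof of Lemma~\ref{lem-L} to each of the $I_1$-type terms occurring here: after the substitution $t\mapsto\tau^{T}$ the integrand is smooth in $\gamma$ on $(-c_6,\infty)$ and, on a right half-plane $\mathrm{Re}(s)>d_k/d_w$, is dominated by a $T$-integrable function independent of $\gamma$. This licenses passing $d/d\gamma$ under the integral and produces the functional equation~\eqref{l-dif} termwise on each piece's half-plane of absolute convergence; the exponential-error term is treated by the same argument applied to the $I_3$-type integral. Iterating this, the derivatives $\frac{d^{l}}{d\gamma^{l}}$ of each term extend meromorphically to the whole plane and $\zeta(s,\gamma)$ varies smoothly in $\gamma$, exactly as recorded at the close of the proof of Lemma~\ref{lem-L}. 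Consequently both sides of~\eqref{l-dif} are genuine meromorphic functions of $s$ on all of $\mathbb{C}$ and they agree on the common right half-plane of convergence.

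Finally I would invoke the identity theorem: two meromorphic functions on the connected domain $\mathbb{C}$ that coincide on a nonempty open subset coincide everywhere. Applied to the two sides of~\eqref{l-dif}, this yields the functional equation throughout the whole complex plane, which is the assertion of the corollary.

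The step I expect to be the main obstacle is the interchange of the $\gamma$-derivative with the meromorphic continuation in $s$, that is, verifying that $\partial_\gamma$ of the continued $\zeta$ is in fact the continuation of $\partial_\gamma\zeta$ rather than merely agreeing with it on the region of convergence. The crux is that the dominating bound in Lemma~\ref{lem-L} be uniform enough in $\gamma$ to force the continued function to depend differentiably on $\gamma$ with the expected derivative; once the recovery-by-integration argument of Lemma~\ref{lem-L} is in place this holds automatically, so the corollary is in the end a bookkeeping extension of that lemma across the finitely many towers of poles supplied by Lemma~\ref{ExpBounds}.
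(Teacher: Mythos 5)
Your proposal is correct and is precisely the argument the paper intends: the corollary is stated there without proof because it follows immediately from combining the full-plane meromorphic continuation of Lemma \ref{ExpBounds} with the differentiation-under-the-integral argument of Lemma \ref{lem-L} (as reused in Theorem \ref{thm:funcequal}), and your write-up — termwise functional equation on each piece's half-plane of convergence, then uniqueness of analytic continuation — simply makes that explicit. The subtlety you flag, that $\partial_\gamma$ of the continued function is the continuation of $\partial_\gamma\zeta$, is indeed exactly what the recovery-by-integration step at the end of the proof of Lemma \ref{lem-L} is designed to guarantee, so your argument closes without any gap.
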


\section*{Appendix: cited Results by N.~Kajino from \cite{Kajino2010}}

In our work we use 
\cite[Theorem 7.7]{Kajino2010} and \cite[Corollary 7.8]{Kajino2010}
 by 
N. Kajino, which we cite in Theorem~\ref{thm:kajino} and Corollary~\ref{cor:kajino} 
in the end of this Appendix section. 
Although it is not possible to provide all the background for these important and interesting results, we make a somewhat abbreviated exposition  in this section. The reader is encourage to 
look in \cite{Kajino2010} for more details.

Unless mentioned otherwise, 
 in this section 
 we use notation, 
  definitions and results from \cite{Kajino2010}. 
%
Let $K$ be the attractor of the iterated functions system $\{ F_i\}_{i \in S}$ for a finite index set $S$, where $F_i$ are contraction similarities in $\mathbb{R}^{d}$. Then $K$ is a compact subset of $\mathbb{R}^{d}$, and   $V_0$ denotes the boundary between $K$ and the unbounded component of its complement. 
Furthermore, let 
$\C{L} = ((K,S,\{F_i\}_{i \in S}),\mu,\C{E},\C{F},r=(r_i)_{i \in S})$ be a self-similar Dirichlet space, where 
$\mu$ is a self-similar measure with weights $(\mu_i)_{i \in S}$, and 
$\C{E}$ is a self-similar Dirichlet form on $K$ with domain $\C{F}$ and resistance weights $(\mu_i)_{i \in S}$. Note that in our work $\mu_i $, $r_i =\frac1{\rho_F}$, $\gamma_i $ are constant, which means they do not depend on $i$, and the self-similarity relation for 
Dirichlet form $\C{E}$ is the relation 
\eqref{e-s-s-r-f}.

Denote by $W_*$ the set of finite length words in the symbols $S$, 
by $W_{\#}$ the set of non-empty words of finite length, 
and by $\Sigma$ the collection of words of infinite length. 
We use $|w|$ to denote the length of a word in $W_*$. 
Then there is a   self-similar scale
$\C{S}=\{\Lambda_s\}_{s \in (0,1]}$, defined below, 
with weights $\gamma = (\gamma_i)_{i \in S}$, $\gamma_i := \sqrt{r_i \mu_i}$. 
We first define 
the gauge function $g(w) = \sqrt{r^{|w|} \mu(K_w)} = \prod_{i=1}^{|w|} \gamma_{w_i}$, 
and then define  
\begin{equation*}
	\Lambda_s := \{w_1\cdots w_m \in W_* | g(w_1 \cdots w_{m-1}) > s \ge g(w_1\cdots w_m)\}
\end{equation*} 
with the convention that $g(w_1w_2\ldots w_{m-1}) = 2$ when $m=0$. 
The collection $\C{S} = \{ \Lambda_s \}_{s \in (0,1]}$ is called the scale associated to the Dirichlet space. This scale defines the spectral dimension $d_S:= d(\gamma) =dim_{\C{S}} K > 0$ 
by the usual self-similarity relation $\sum_{i \in S}\gamma_i^{d_S}=1$. 
If $v \in \Lambda_s$, then $K_v = v\Sigma$, that is all infinite words beginning with $v$, can be roughly thought of as a set of ``radius'' $s$. 
When there is a metric which gives an equivalent topology we say the metric is adapted to the scale. 

Below we present a brief overview of the necessary conditions for a Dirichlet space to satisfy the conditions of \cite[Theorem 7.7]{Kajino2010}, cited in Theorem~\ref{thm:kajino}, which is needed for our  Theorem \ref{thm:funcequal}. These conditions are satisfied by the class of generalized Sierpinski carpets, as in \cite{BB,BB2,BB3,BBKT}, and for finitely ramified symmetric fractals. 
Naturally more detailed accounts of these conditions and examples of fractals which satisfy them can be found in \cite{Kajino2010}.

A Dirichlet form $(\C{E},\C{F})$ satisfies condition $(CHK)$ 
if it admits a jointly continuous heat kernel. It satisfies $(UHK)$  if the heat kernel satisfies a sub-Gaussian estimate with a metric that is adapted to the self-similar structure of \C{L}. 
\emph{Intersection type finite} is the property of $K$ that the intersection between cells at the same scale can be of only some finite collection of homomorphic types. For example, the standard Sierpinski gasket's self-similar structure (the cells of the same scale are the same as the cells at some particular depth in the construction which only intersect at most one point) or for the Sierpinski carpet (two cells of at the same scale intersect either along a whole edge or at a corner) are Intersection type finite. 
%
The local weight type finite $(LWTF)$ condition holds if the set of ratios between the energy scaling factors, $r_w$, of neighboring cells and the set of ratios of the measure scaling factors $\mu_{w}$ of neighboring cells are both finite. That is, the ratio between the energy and measure scalings between neighboring cells of any size can take only finitely many values. The strong domain self-similarity $(SSDF3\C{S})$ condition holds when the action of a $\C{L}-$isomorphism takes the set of localized continuous functions of finite energy to itself. A localized function is one whose support is not the entire set $F$.  


A subset $X \subset W_{\#}$  is \emph{independent} if there is an injection of $\Sigma(X)$ into $\Sigma(S)$. 
Furthermore, 
the subset $X$ is \emph{separated} if it is non-empty, finite, independent, and the number of $n$ such that $\sigma^{n}(w) \in \sigma_x \Sigma$ is 
 finite for all $w \in \Sigma$ for some  $x \in W_{\#}$. 
 Here $\sigma^{n}$ is the $n'th$ iterate of the shift map and $\sigma_x \Sigma$  are all words beginning with the prefix $x$. 
 Note the difference between the usage of the superscipt and subscript. The rational boundary $(RB)$ condition holds when there exists some integer $N \in \B{N}$ and a separated set $X_k \subset W_{\#}$ and $w_k \in W_*$ for each $k =1, \ldots , N$ such that the post-critical set associated to the iterated function system satisfies $\C{P}_{\C{L}} = \bigcup_{i=1}^{N} \Sigma_{wk}[X_k]$. To quote Kajino: ``[r]oughly speaking, $(RB)$ says that the boundary $V_0$ is a finite union of self-similar sets.'' The notion of rational boundary for a self-similar set was introduced in \cite{KiHKE}.

In the following, 
the functions $Z_b$ are partition functions with various boundary conditions, where the value of $b$ indicates where Dirichlet conditions are imposed (in particular, $Z_D$ is the partition function where the Dirichlet conditions are imposed only at the boundary of \C{L}), 
and $d_{\partial}$ will be the cell-counting dimension of $V_0$ with respect to the scale~\C{S}.

\begin{theorem}{\cite[Theorem 7.7]{Kajino2010}}\label{thm:kajino}
Assume that $K$ is connected and that $(\C{E},\C{F})$ is conservative. Suppose that $(\C{L},\C{S})$ is {\bfseries intersection type finite} and that $(LWTF)$, $(SSDF3\C{S})$, $(CHK)$ and $(UHK)$ hold. Let $F \subset K$ be a closed subset of $K$, let $w \in W_*$ and let $X \subset W_{\#}$ be separated and satisfy $Cap_{\C{E}}(K[X]) >0$. Set $L := F \cup K_w[X]$ and $d_{\partial} :=d(\gamma, X)$ and suppose $F \subset L \subset K$. Then there exists $c_1,c_2 \in (0,\infty)$ such that for any $t \in (0,1]$,
\begin{equation*}
	c_1t^{-d_{\partial}/2} \le Z_{F^{c}}(t) - Z_{L^{c}}(t) \le c_2t^{-d_{\partial}/2}.
\end{equation*}
\end{theorem}

\begin{corollary}{\cite[Corollary 7.8]{Kajino2010}}\label{cor:kajino}
Assume that $K$ is connected and that $(\C{E},\C{F})$ is conservative. Suppose that $(\C{L},\C{S})$ is intersection type finite and that $(LWTF)$, $(SSDFF3\C{S})$, $(CHK)$ and $(UHK)$ hold. Suppose also that $\gamma_i = \gamma$ for any $i \in S$ for some $\gamma \in (0,1)$ and that $\C{L}$ satisfies $(RB)$ with $N \in \B{N}$ and $X_k \subset W_{\#}$ for $k \in \{1, \ldots , N\}$. Let $d_{\partial} :=\max_{1 \le k \le N} d(\gamma,X_k)\ $
and let $G$ be the continuous $\log(\gamma^{-1})-$periodic function given in Corollary 5.3. If $Cap_{\C{E}}(K[X_J]) >$ for some $J \in \{1, \ldots , N\}$ satisfying $d(\gamma,X_j) = d_{\partial}$ then there exists $c_1,c_2 \in (0,\infty)$ such that for any $t \in (0,1]$,
\begin{equation*}
	c_1t^{-d_{\partial}/2} \le t^{-d_s/2}G\left(\frac{1}{2}\log\frac{1}{t}\right) - Z_D(t) \le c_2t^{-d_{\partial}/2}.
\end{equation*}
\end{corollary}

\subsection*{Acknowledgments } 
{The authors are very  grateful to Eric Akkermans, Gerald Dunne and Naotaka Kajino for  important  helpful discussions leading to this paper. Thanks also to Joe Chen for helping to verify some of the calculations and Luke Rogers for suggesting reference~\cite{CRSS2007}. {Research supported in part by the National Science Foundation, grants DMS-0652440 (first author) and DMS-0505622 (second author).}}

%

\

\end{document}